\newtheorem{e-proposition}[theorem]{Proposition}
\newtheorem{e-definition}[theorem]{Definition\rm}
\newtheorem{remark}{\it Remark\/}
\newtheorem{proof}{Proof}
\newtheorem{theoreme}{Th\'eor\`eme}[section]
\newtheorem{proposition}[theoreme]{Proposition}
\def\og{\leavevmode\raise.3ex\hbox{$\scriptscriptstyle\langle\!\langle$~}}
\def\fg{\leavevmode\raise.3ex\hbox{~$\!\scriptscriptstyle\,\rangle\!\rangle$}}
\begin{document}
\centerline{}
\begin{frontmatter}


\selectlanguage{english}
\title{A mixed EIM-SVD tensor decomposition for bivariate functions}


\selectlanguage{english}
\author[authorlabel1]{Florian De Vuyst},
\ead{florian.de-vuyst@utc.fr}
\author[authorlabel2]{Asma Toumi}
\ead{asma.toumi@ens-paris-saclay.fr}

\address[authorlabel1]{LMAC, Sorbonne Universit\'es, Universit\'e de Technologie de Compi\`egne, 60200 Compi\`egne cedex, France}
\address[authorlabel2]{CMLA, 
ENS Paris-Saclay, CNRS, 61 avenue de Pr\'esident Wilson 94235 Cachan cedex, France.}


\medskip

\begin{abstract}
\selectlanguage{english}

In this paper we present a mixed EIM-SVD tensor decomposition for bivariate functions. This method is composed, as its name suggests, of two main steps. The first one, provides an approximate representation of a function $f$ in separate form by the use of a Tensor Empirical Interpolation Method (TEIM). The second phase consists in applying the Singular Value Decomposition (SVD) with low-rank truncation to the separate form of $f$ resulting from the first phase. Error estimates of the developed TEIM as well as truncated SVD decomposition are derived. Numerical experiments confirm that the decomposition techniques are efficient in terms of stability and accuracy.



%

\end{abstract}
\end{frontmatter}


\selectlanguage{english}
\section{Introduction and motivation}
\label{intro}

Low-rank tensor decompositions have recently become increasingly popular in numerous applications, as for example multivariate random and uncertainty quantification, parametrized problems, design analysis, image processing, high-order statistics, data analytics, etc (see \cite{tensor_D} for a review). 

In this paper, we are interested in tensor decomposition and/or approximate (low-rank) tensor decompositions of continuous multivariate functions. For simplicity purposes, we only deal with bivariate functions and shall introduce innovative algorithms in this context:
for a continuous function $f=f(x,y)$ defined over $I\times J$, we look for low-rank approximations $\tilde f^{(K)}(x,y)$ in the form 
\[
  \tilde f^{(K)}(x,y) = \sum_{k=1}^K \sigma_k\, \varphi_k(x)\,\psi_k(y)
\]
where $K$ is expected to be ``rather small'', $\sigma_k$ are some coefficients and
$\varphi_k$, $\psi_k$ are continuous univariate functions. \\


We focus on the decomposition of bivariate functions using the Empirical Interpolation~(EIM) methodology pioneered by Barrault et al. in~\cite{maday_cras} in 2004.
The EIM algorithm is a procedure that constructs interpolations of parametrized functions
and collocation points -- the so-called magic points -- within an incremental greedy procedure. The resulting interpolation operators are accurate
over all the parameter set by design. Theoretically, both a priori error bounds \cite{maday_generalisation} as well as a posteriori error bounds \cite{eim_aposteriori} of EIM have been derived.
 
%
Since the seminal papers, several extensions and variations of EIM have been developed.  Maday and Mula recently proposed a generalization of the EIM, the so-called GEIM~\cite{maday_general}. Their approach permits to approximately represent a given set as a linear combination of very few computable elements. In \cite{eim_weight}, Chen and al. extended the EIM to a weighted empirical interpolation method in order to approximate nonlinear parametric functions with weighted parameters. 
%
In \cite{eim_symetrique}, Casenave et al. developed a symmetric variant of~EIM to 
construct approximate representations of two-variable functions in separated form. 

In the present paper, from ideas in~\cite{eim_symetrique} we apply the EIM procedure ``direction-by-direction'' for each of the two variables, seeing the other variable as a parameter. This returns interpolation basis functions $q_k(x)$ and $s_\ell(y)$ for both variables $x$ and $y$ as well as a collocation grid made of the tensor product of the magic points~$(x_i,y_j)$. Then, the matrix $F=(f(x_i,y_j)_{i,j})$ is reduced using the Singular Value Decomposition (SVD). This provides a
new decomposition of $f$ and new component functions $\varphi_k(x)$ and $\psi_k(y)$ involving
a single summation over one index $k$.
It turns out that if the set of (nonnegative) singular values $\{\sigma_k\}_k$ decreases fast enough, then the truncation of the summation over the first principal components (where the 
$\sigma_k$ are arranged in decreasing order) provides an accurate approximate low-rank
decomposition of $f$.
%
%
~\\

This paper is outlined as follows. In section \ref{sec:teim} we develop the tensorized  empirical interpolation decomposition (TEIM) and we present an a priori error analysis of the method. To reduce the complexity of the TEIM, we apply the singular value decomposition (SVD)in section \ref{sec:svd} and we study the low-rank tensor decomposition by rank truncation.
We provide a theoretical error bound of the approximation.
Numerical results are then presented in section \ref{sec:num}. Finally, some conclusions and final remarks are given in section \ref{sec:conclusion}.
%
%
\section{Tensor Empirical Interpolation methodology}
\label{sec:teim}
Let $m$ and $n$ be two integers, $I$ and $J$ two closed intervals, $\Omega = I \times J\subset\mathbb{R}^2$ and $f :\Omega\longrightarrow \mathbb{R}$ a uniformly continuous function on $\Omega$. We look for an interpolation of the function $f$ in a separated form with respect to $x$ and $y$ with the following representation:
\begin{equation}
\mathcal{I}^{m,n} f(x,y) = \sum_{i=1}^m \sum_{j=1}^n f(x_i,y_j) \, q_i(x) \,s_j(y),
\quad (x,y)\in \Omega
\end{equation}
where $x_i$, $i\in\{1,...,m\}$ (resp. $y_j$, $j\in\{1,...,n\}$) are some interpolation points belonging to $I$ (resp. $J$). To build the interpolation operator $\mathcal{I}^{m,n}$, we need to construct, using a greedy  procedure, the set of spanning functions $\{q_1,q_2,...,q_m\}$ (resp. $\{s_1,s_2,...,s_n\}$) for the $x$ (resp. $y$) variable together with the associated interpolation points $\{x_1,x_2,...,x_m\}$ (resp. $\{y_1,y_2,...,y_n\}$).

\subsection{The TEIM algorithm}
%
The TEIM (Tensorized EIM) algorithm applies the EIM algorithm in each direction in order to return both basis functions and magic points.
It can be summarized in three steps as follows:
\begin{enumerate}
  \item First step: EIM in the $x$ direction. We consider $x$ as a space variable and $y$ is seen as a parameter living in $J$. The EIM algorithm then returns interpolation points $x_i$ and functions $q_i(x)$.  
  \begin{enumerate}
    \item We first set 
		\[
		\tilde{y}_1 = \arg\max_{y\in J} \|f(.,y)-0\|_{L^\infty_x(I)}, \quad
		x_1 = \arg\max_{x\in I} |f(x,\tilde{y}_1)-0|
		\]
		and 
		\[
		q^{(1)}_1(x) = \frac{f(x,\tilde{y}_1)}{f(x_1,\tilde{y}_1)}.
		\]
		With  $x_1$  and $q^{(1)}_1(x) $ we define the interpolation operator
		$\mathcal{I}^{(1)}_x$ by
		\[
		\mathcal{I}^{(1)}_x f(x,y)=f(x_1,y)\,q^{(1)}_1(x).
		\]
 \item For $k\in\{2,...,m\}$, we compute in a similar way  
\[
\tilde{y}_k = \arg\max_{y\in J} \|f(.,y)-\mathcal{I}^{(k-1)}_xf(.,y)\|_{L^\infty_x(I)},
\]
\[
x_k = \arg\max_{x\in I} |f(x,\tilde{y}_k)-\mathcal{I}^{(k-1)}_xf(x,y)|
\]
and 
\[
q^{(k)}_k(x)  = \frac{f(x,\tilde{y}_k)-\mathcal{I}^{(k-1)}_xf(x, \tilde{y}_k)}{f(x_k,\tilde{y}_k)-\mathcal{I}^{(k-1)}_x f(x_k,\tilde{y}_k)}.
\]
By construction, we have $q^{(k)}_k(x_k)=1$ and $q^{(k)}_k(x_i)=0$ for $i\in\{1,...,k-1\}$.
We slightly modify the original EIM algorithm by updating
the functions $q_i$ for all $i\in \{1,k-1\}$ in order to derive basis functions
that fulfill the Lagrange function property: $q^{(k)}_i(x_j)=\delta_{ij}$,
$i,j\in\{1,...,k-1\}$. For that we compute
\begin{equation}
q^{(k)}_i(x) = q^{(k-1)}_i(x) - q^{(k-1)}_i(x_k) q^{(k)}_k(x)
\quad \forall i\in \{1,k-1\}.
\end{equation}
At iterate $m$, using the $m$  points $\{x_1,...,x_m\}$ and the $m$ updated functions $\{q^{(m)}_i\}_{i=1,...,m}$, we get the rank-$m$ interpolation operator 
\begin{equation}\label{ix}
\mathcal{I}^{(m)}_x f(x,y)=\sum_{i=1}^{m} f(x_i,y)\, q^{(m)}_i(x),\quad 
y\in J.
\end{equation}
\end{enumerate}
%
%
%
\item Second step: we swap the role of $x$ and $y$. We suppose that $y$ is the space variable and $x$ is the parameter and we apply the previous procedure. We obtain a set of Lagrange functions $\{s^{(n)}_1,s^{(n)}_2,...,s^{(n)}_n\}$ of rank $n$ which satisfy the Lagrange property $s^{(n)}_i(y_j)=\delta_{ij}$, $\forall 1\leq i,j \leq n$ and the interpolation
\begin{equation}\label{iy}
  \mathcal{I}^{(n)}_y f(x,y) = \sum_{i=1}^n f(x,y_i) \, s^{(n)}_i(y),\quad x\in I. 
\end{equation} 

\item Third step: tensorized interpolation of the function $f$. By applying $\mathcal{I}^{(m)}_x$ defined by \eqref{ix} to the function $f$, for a fixed $y\in J$ we have
\[
\mathcal{I}^{(m)}_x f(x,y) = \sum_{i=1}^m f(x_i,y) \, q_i(x),\quad y\in J.
\]
Then by applying $\mathcal{I}^{(n)}_y$ defined by (\ref{iy}) to the function $\mathcal{I}^{(m)}_x f$ we obtain
\begin{equation}
\mathcal{I}^{(n)}_y\mathcal{I}^{(m)}_x f(x,y) =
\sum_{i=1}^m \sum_{j=1}^n f(x_i,y_j) \, q^{(m)}_i(x) s^{(n)}_j(y)
= \mathcal{I}^{(m)}_x \mathcal{I}^{(n)}_y f(x,y).
\label{eq:1}
\end{equation}

The interpolation operator $\mathcal{I}^{m,n}$ of $f$ is then naturally defined by:

\begin{equation}\label{tensor_decomp}
\mathcal{I}^{m,n} f(x,y) = \mathcal{I}^{(n)}_y\mathcal{I}^{(m)}_x f(x,y) 
= \mathcal{I}^{(m)}_x \mathcal{I}^{(n)}_y f(x,y)
= \sum_{i=1}^m \sum_{j=1}^n f(x_i,y_j) \, q^{(m)}_i(x) s^{(n)}_j(y).  
\end{equation}
\end{enumerate}

\subsection{Properties of the TEIM method}
We now summarize properties of the TEIM algorithm and detail proofs.\\

\begin{proposition}\label{propo_delta}
The matrix $Q^m$ (resp. $S^n$ ) defined by~: $Q^m_{i,j}= q^{(m)}_j(x_i)$, $\forall 1\leq i,j \leq m $ (resp. $S^n_{i,j}= s^{(n)}_j(x_i)$, $\forall 1\leq i,j \leq n $) is equal to the identity matrix of order $m\times m$  (resp. of order $n \times n$).
\end{proposition}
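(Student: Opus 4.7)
The plan is to argue by induction on the iteration index $k$, establishing jointly that $q^{(k)}_j(x_i) = \delta_{ij}$ for all $1 \leq i, j \leq k$; the stated identity $Q^m = I_m$ is then obtained by taking $k = m$. The companion statement for $S^n$ follows from the identical argument applied after swapping the roles of $x$ and $y$ in the second step of the algorithm, so I would treat only $Q^m$ in detail.

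The base case $k=1$ is immediate: by definition $q^{(1)}_1(x_1) = f(x_1,\tilde y_1)/f(x_1,\tilde y_1) = 1$. For the inductive step, I would first handle the newly introduced column $j = k$. The diagonal entry $q^{(k)}_k(x_k) = 1$ is built into the normalisation that defines $q^{(k)}_k$. For $i < k$, I would plug the inductive hypothesis $q^{(k-1)}_\ell(x_i) = \delta_{i\ell}$ into the formula~\eqref{ix} to obtain
\[
\mathcal{I}^{(k-1)}_x f(x_i,\tilde y_k) = \sum_{\ell=1}^{k-1} f(x_\ell,\tilde y_k)\,q^{(k-1)}_\ell(x_i) = f(x_i,\tilde y_k),
\]
so that the numerator defining $q^{(k)}_k(x_i)$ vanishes and $q^{(k)}_k(x_i) = 0$.

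With the new column settled, the remaining entries $j < k$ would be dispatched via the Lagrange-type update $q^{(k)}_j(x) = q^{(k-1)}_j(x) - q^{(k-1)}_j(x_k)\,q^{(k)}_k(x)$: evaluating at $x = x_i$ with $i < k$ gives $q^{(k)}_j(x_i) = q^{(k-1)}_j(x_i) = \delta_{ij}$ by induction, while at $x = x_k$ the two terms cancel, yielding $q^{(k)}_j(x_k) = 0 = \delta_{jk}$. The main subtlety is the case $i < k$, $j = k$ treated above: it crucially requires the residual $f(\cdot,\tilde y_k) - \mathcal{I}^{(k-1)}_x f(\cdot,\tilde y_k)$ to vanish at every previous magic point, which is possible only because the update step restores the Lagrange property of the \emph{full} set $\{q^{(k-1)}_\ell\}_{\ell=1}^{k-1}$ at each iteration; without it one would merely obtain a triangular structure and the induction would not close.
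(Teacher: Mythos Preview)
Your proof is correct and follows essentially the same inductive route as the paper's own argument: same base case, same use of the induction hypothesis to show the residual vanishes at previous magic points (hence $q^{(k)}_k(x_i)=0$ for $i<k$), and same appeal to the update rule to recover $q^{(k)}_j(x_i)=\delta_{ij}$ for $j<k$. Your closing remark on why the Lagrange update is indispensable for the induction to close is a welcome clarification but does not depart from the paper's strategy.
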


\begin{proof}
We proceed by induction. Clearly, $Q^1_{1,1}=q^{(1)}_1(x_1)=1$. Now, suppose that $Q^{m-1} = I_{m-1}$ ( where $I_{m-1}$ is the identity matrix of order $(m-1)\times (m-1)$). By construction, $q^{(m)}_m(x_m)=1$ and for all $j< m$, 
\begin{equation}\label{qM}
  q^{(m)}_m(x_j)= \frac{f(x_j,\tilde{y}_m)-\mathcal{I}^{(m-1)}_xf(x_j, \tilde{y}_m)}{f(x_m,y_m)-\mathcal{I}^{(m-1)}_x f(x_m,\tilde{y}_m)}
\end{equation} 
where,
\[
  \mathcal{I}^{(m-1)}_x f(x_j,\tilde{y}_m) = \sum_{i=1}^{m-1} f(x_i,\tilde{y}_m)\,q^{(m-1)}_i(x_j).
\]
Using the induction hypothesis we get $\forall j<m$~: $q_i^{(m-1)}(x_j) = \delta_{ij}$ and then 
\[
\mathcal{I}^{(m-1)}_x f(x_j,\tilde{y}_m)=f(x_j,\tilde{y}_m). 
\]
We replace in (\ref{qM}) then $\forall j<m$, $q^{(m)}_m(x_j)=0$. $\forall j<m$, $q^{(m)}_j(x_m)=q^{(m-1)}_j(x_m)-q^{(m-1)}_j(x_m)q^{(m)}_m(x_m)=0$. Furthermore, for all $\{i,j\}<m$, $q^{(m)}_j(x_i)=q^{(m-1)}_j(x_i)-q^{(m-1)}_j(x_i)\,q^{(m)}_m(x_i)=\delta_{ij}$ (induction hypothesis).
The proof for the matrix $S^{n}$ can be done in the same way.
\qed\\
\end{proof}

We have also the following interesting result:

\begin{proposition}\label{prop_interp_prop}
The interpolation operator $\mathcal{I}^{m,n}$ of $f$ satisfies the following interpolation property
\begin{equation}
\left\{
  \begin{array}{ll}
   \mathcal{I}^{m,n} f(x_k,y) &=  f(x_k,y) \quad \forall y\in J,    \\
   \mathcal{I}^{m,n} f(x,y_k) &=  f(x,y_k) \quad \forall x\in I.
  \end{array}
\right.
\label{eq:reinforced}
\end{equation}
\end{proposition}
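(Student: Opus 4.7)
The plan is to exploit the double-sum expression for $\mathcal{I}^{m,n}$ in (\ref{tensor_decomp}) together with the Lagrange basis property of Proposition \ref{propo_delta}; the two identities are perfect mirror images of one another, so I will only carry out the first one and close the second by the obvious swap of the roles $(x,q^{(m)},m)\leftrightarrow(y,s^{(n)},n)$.

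First I would fix $k\in\{1,\dots,m\}$ and an arbitrary $y\in J$ and substitute $x=x_k$ into (\ref{tensor_decomp}). Proposition \ref{propo_delta} gives $q_i^{(m)}(x_k)=\delta_{ik}$, which collapses the sum over $i$ to its unique non-zero term, so that
\[
\mathcal{I}^{m,n} f(x_k,y) \;=\; \sum_{i=1}^m\sum_{j=1}^n f(x_i,y_j)\,q_i^{(m)}(x_k)\,s_j^{(n)}(y) \;=\; \sum_{j=1}^n f(x_k,y_j)\,s_j^{(n)}(y) \;=\; \mathcal{I}_y^{(n)} f(x_k,y),
\]
by definition (\ref{iy}). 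The first identity is therefore equivalent to the one-dimensional statement $\mathcal{I}_y^{(n)} f(x_k,\cdot)=f(x_k,\cdot)$ on the whole of $J$, and not just at the magic points $y_j$.

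The main obstacle will be exactly this residual one-dimensional statement: the EIM operator $\mathcal{I}_y^{(n)}$ is \emph{a priori} exact only on the linear span of the snapshots used in its construction — namely the cross-sections $f(\tilde{x}_j,\cdot)$ selected during the $y$-direction greedy loop of Step~2 — so one must argue that every cross-section $f(x_k,\cdot)$, with $x_k$ a magic point of the $x$-direction EIM, also lies in $\mathrm{span}(s_1^{(n)},\dots,s_n^{(n)})$. My plan for closing this gap is to revisit the construction of the two greedy selections and establish a structural link between the snapshot subspaces they generate — either by showing that the two EIMs produce a common rank-$\min(m,n)$ subspace of $C(\Omega)$, or by invoking a completeness/rank condition on $f$ ensuring that the $y$-direction greedy procedure has indeed captured $f(x_k,\cdot)$ for each $k$. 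Once this step is secured, the companion identity $\mathcal{I}^{m,n} f(x,y_k)=f(x,y_k)$ follows by the same reduction applied to the second form $\mathcal{I}^{m,n}=\mathcal{I}_x^{(m)}\mathcal{I}_y^{(n)} f$ from (\ref{eq:1}), using $s_j^{(n)}(y_k)=\delta_{jk}$ in place of the $q$-identity.
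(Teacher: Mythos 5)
Your reduction coincides exactly with the paper's own computation: substituting $x=x_k$ into \eqref{tensor_decomp} and invoking $q_i^{(m)}(x_k)=\delta_{ik}$ from Proposition~\ref{propo_delta} collapses the double sum to $\sum_{j=1}^n f(x_k,y_j)\,s_j^{(n)}(y)=\mathcal{I}_y^{(n)}f(x_k,y)$. Where the paper then simply writes ``$=f(x_k,y)$'' and calls the whole argument straightforward, you stop and point out that this last equality is a nontrivial one-dimensional exactness claim: $\mathcal{I}_y^{(n)}g=g$ on all of $J$ holds precisely when $g\in\mathrm{span}(s_1^{(n)},\dots,s_n^{(n)})=\mathrm{span}\bigl(f(\tilde x_1,\cdot),\dots,f(\tilde x_n,\cdot)\bigr)$, the span of the cross-sections selected as ``parameters'' by the $y$-direction greedy loop, and nothing stated in the algorithm forces the $x$-direction magic point $x_k$ to be one of those selected $\tilde x_\ell$. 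You have thus located the entire mathematical content of the proposition, which the paper's one-line proof silently assumes.

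That said, your submission is not a proof: you announce a plan to link the two snapshot subspaces but do not execute it, and without such a link the statement genuinely fails (if the cross-sections $f(x,\cdot)$ are pairwise linearly independent, $\mathcal{I}_y^{(n)}f(x_k,\cdot)=f(x_k,\cdot)$ forces $x_k\in\{\tilde x_1,\dots,\tilde x_n\}$). The missing lemma is that the two greedy loops are entangled: assuming the successive maximizers are unique, $(x_1,\tilde y_1)$ and $(\tilde x_1,y_1)$ are both global maximizers of $|f|$ on $\Omega$, hence equal; and if $\tilde x_j=x_j$, $\tilde y_j=y_j$ for all $j<k$, a direct computation shows $\mathcal{I}_x^{(k-1)}f=\mathcal{I}_y^{(k-1)}f$ as bivariate functions (both equal $\sum_{i,j}(\hat F^{-1})_{ji}\,f(x_i,\cdot)\,f(\cdot,y_j)$ with $\hat F_{ij}=f(x_i,y_j)$ of size $k-1$), so the residuals, and therefore the next selected points, coincide. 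This gives $f(x_k,\cdot)\in\mathrm{span}(s_1^{(n)},\dots,s_n^{(n)})$ for $k\le n$ and closes your gap when $m=n$ (and for $k\le\min(m,n)$ otherwise). Until you either prove this entanglement or impose it as a hypothesis, the proposition remains unproved --- both in your write-up and, strictly speaking, in the paper's.
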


\begin{proof} Straightforwardly, from the Lagrange property, we have
\[
\mathcal{I}^{m,n} f(x_k,y)=\sum_{i=1}^m \sum_{j=1}^n f(x_i,y_j) \, q_i^{(m)}(x_k)\, s_j^{(n)}(y) = \sum_{j=1}^n f(x_k,y_j)\, s_j^{(n)}(y) =f(x_k,y), 
\]
for all $y\in J$ and 
\[
\mathcal{I}^{m,n} f(x,y_k)=\sum_{i=1}^m \sum_{j=1}^n f(x_i,y_j) \, q_i^{(m)}(x)\, 
s_j^{(n)}(y_k) = \sum_{i=1}^m f(x_i,y_k)\, q_i^{(m)}(x) =f(x,y_k)
\]
for all $x\in I$.
\qed
\end{proof}

\begin{remark}
Property~\eqref{eq:reinforced} can be seen as a reinforced interpolation
condition because interpolation conditions are not only valid on collocation points but also on an array of intervals. Notice that Casenave et al.~\cite{sym_eim} also have reinforced interpolation properties with their symmetric EIM approach.
%
\end{remark}
  
\subsection{A priori error estimate for TEIM} 
\label{sec:error}
We  follow the same ideas as in \cite{maday_cras} and \cite{maday_generalisation}.  
We define the Lebesgue constants $L_m$ and $\tilde{L}_n$ (see~\cite{lambda_poly}) by 
\[
L_m=\sup_{x\in I}\ \sum^m_{i=1}\vert q_i^{(m)}(x) \vert, \quad
\tilde{L}_n=\sup_{y\in J}\ \sum^n_{i=1}\vert s_i^{(n)}(y) \vert
\] 
\begin{proposition}\label{prop_erreur}
The interpolation error $\varepsilon^{m,n} \equiv \Vert f-I^{m,n}f\Vert_{L^{\infty}}$ satisfies 
\begin{equation}\label{error}
 \varepsilon^{m,n} \leq \varepsilon^*_{m,n}\left(1+L_m \tilde{L}_n\right), 
\end{equation}
where  $\varepsilon^*_{m,n} \equiv \inf_{f^*\in W^{m,n}} \|f - f^*\|_{L^\infty} $ and $f^*$ is the best approximation of $f$ in 
\[
W^{m,n}=\mathop{span}(q_k(x) s_\ell(y),\ 1\leq k\leq m,\ 1\leq \ell\leq n).
\]
The Lebesgue constants $L_m$ and $\tilde{L}_n$ verify 
\begin{equation}\label{lambda}
  L_m \leq 2^{m}-1, \quad  \tilde{L}_n \leq 2^{n}-1.
\end{equation}
\end{proposition}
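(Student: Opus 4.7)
The plan is to combine the standard Lebesgue-constant / best-approximation splitting with an induction on $m$ exploiting the greedy selection of magic points.

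For the error estimate, I would first observe that thanks to the Lagrange property established in Proposition~\ref{propo_delta}, the operator $\mathcal{I}^{m,n}$ acts as the identity on $W^{m,n}$: indeed, for $w(x,y)=\sum_{k,\ell} c_{k\ell}\,q_k^{(m)}(x)s_\ell^{(n)}(y)$, evaluation at $(x_i,y_j)$ picks off $c_{ij}$, so $\mathcal{I}^{m,n}w=w$. Hence for any $f^{\ast}\in W^{m,n}$ one can write
\[
f-\mathcal{I}^{m,n}f \;=\; (f-f^{\ast})-\mathcal{I}^{m,n}(f-f^{\ast}).
\]
Next I would bound the operator norm of $\mathcal{I}^{m,n}$ on $L^\infty(\Omega)$: directly from the defining formula \eqref{tensor_decomp},
\[
|\mathcal{I}^{m,n}g(x,y)|\leq \|g\|_{L^\infty}\Bigl(\sum_{i=1}^m|q_i^{(m)}(x)|\Bigr)\Bigl(\sum_{j=1}^n|s_j^{(n)}(y)|\Bigr)\leq L_m\tilde{L}_n\,\|g\|_{L^\infty}.
\]
Combining the two displays and taking the infimum over $f^{\ast}\in W^{m,n}$ yields \eqref{error}.

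For the Lebesgue-constant bound I would proceed by induction on $m$. The key greedy estimate is $|q_m^{(m)}(x)|\le 1$ for every $x\in I$: writing $r_{m-1}(x,y)=f(x,y)-\mathcal{I}_x^{(m-1)}f(x,y)$, the construction selects $x_m$ as the $\arg\max_{x\in I}|r_{m-1}(x,\tilde y_m)|$, so the denominator in the definition of $q_m^{(m)}$ dominates its numerator in absolute value. The update rule $q_i^{(m)}(x)=q_i^{(m-1)}(x)-q_i^{(m-1)}(x_m)\,q_m^{(m)}(x)$, applied for $i=1,\ldots,m-1$, then gives
\[
\sum_{i=1}^{m-1}|q_i^{(m)}(x)|\leq \sum_{i=1}^{m-1}|q_i^{(m-1)}(x)|+|q_m^{(m)}(x)|\sum_{i=1}^{m-1}|q_i^{(m-1)}(x_m)|\leq L_{m-1}\bigl(1+|q_m^{(m)}(x)|\bigr).
\]
Adding $|q_m^{(m)}(x)|\leq 1$ and taking the supremum in $x$ produces the recursion $L_m\leq 2L_{m-1}+1$, and since $L_1\leq 1=2^1-1$, a straightforward induction gives $L_m\leq 2^m-1$. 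The same argument applied in the $y$ direction yields $\tilde L_n\leq 2^n-1$.

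The principal technical point, and the one I would be most careful about, is the greedy bound $|q_m^{(m)}(x)|\leq 1$: it is what makes the whole induction collapse to a clean geometric recursion, and it relies crucially on the fact that the magic point $x_m$ is chosen \emph{after} $\tilde y_m$, so that the same parameter value appears in both the numerator and denominator of $q_m^{(m)}$. Once this is in place, the rest is a routine combination of a triangle inequality on the update formula with the standard best-approximation/operator-norm decomposition.
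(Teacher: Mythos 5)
Your proof is correct, and the first half (the error estimate) follows essentially the same route as the paper: both arguments rest on the fact that $\mathcal{I}^{m,n}$ reproduces $W^{m,n}$ (via the Lagrange property of Proposition~\ref{propo_delta}) together with the operator-norm bound $L_m\tilde{L}_n$; the paper just writes the pointwise sum explicitly instead of phrasing it as an operator norm. For the Lebesgue constants, however, your induction is organized differently, and in a way that is actually tighter than the paper's. The paper proves the uniform pointwise bound $|q_k^{(m)}(x)|\le 2^{m-1}$ for \emph{every} $k\le m$ and then asserts $\sum_{k=1}^m|q_k^{(m)}(x)|\le\sum_{k=0}^{m-1}2^k=2^m-1$; but a uniform bound of $2^{m-1}$ on each of the $m$ terms only yields $m\,2^{m-1}$, so the geometric-series summation does not follow from what was actually proved there. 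Your recursion on the Lebesgue sum itself,
\[
\sum_{i=1}^{m}|q_i^{(m)}(x)|\;\le\;\sum_{i=1}^{m-1}|q_i^{(m-1)}(x)|+|q_m^{(m)}(x)|\Bigl(1+\sum_{i=1}^{m-1}|q_i^{(m-1)}(x_m)|\Bigr)\;\le\;2L_{m-1}+1,
\]
combined with the greedy estimate $|q_m^{(m)}(x)|\le 1$ (which you correctly identify as the crux, and which holds because $x_m$ maximizes the residual at the already-selected parameter $\tilde y_m$), gives $L_m\le 2^m-1$ cleanly. So your argument both reaches the stated bound and repairs the small logical gap in the paper's own summation step.
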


\begin{proof} 
For all $x$ and $y$,
\begin{align*}
  \mathcal{I}^{m,n}f(x,y)-f^*_{m,n}(x,y)&=\sum^m_{i=1}\sum^n_{j=1}\left(f(x_i,y_j)-f^*_{m,n}(x_i,y_j)\right) q^{(m)}_i(x)s^{(n)}_j(y) \\&:= \sum^m_{i=1}\sum^n_{j=1} e^*_{m,n}(x_i,y_j)q^{(m)}_i(x)s^{(n)}_j(y).
\end{align*}
As a result, we have the following estimation of the error
\begin{align*}
  \Vert f(x,y)-\mathcal{I}^{m,n}f(x,y)\Vert_{L^{\infty}} &\leq \Vert f(x,y)-f^*_{m,n}(x,y)\Vert_{L^{\infty}} + \Vert \mathcal{I}^{m,n}f(x,y)-f^*_{m,n}(x,y)\Vert_{L^{\infty}}%
\\&\leq \varepsilon^*_{m,n} + \Vert \sum^m_{i=1}\sum^n_{j=1} e^*_{m,n}(x_i,y_j)q^{(m)}_i(x)s^{(n)}_j(y)\Vert_{L^{\infty}}%
\\& \leq \varepsilon^*_{m,n} + \max_{i\in\{1,m\}}\left(\max_{j\in\{1,n\}} |e^*_{m,n}(x_i,y_j)|\right) L_m \tilde{L}_n%
\\ &\leq \left(1+ L_m \tilde{L}_n\right)\varepsilon^*_{m,n}.
\end{align*}
The estimation (\ref{error}) is then proven. For the formula (\ref{lambda}) we begin by proving, by induction, that
\begin{equation}\label{lambda_qm}
  \vert q^{(m)}_k(x)\vert \leq 2^{m-1}, \quad \forall 1\leq k\leq m.
\end{equation}
We have $\vert q^{(1)}_k(x) \vert \leq 1:=2^{1-1}$. Suppose now that (\ref{lambda_qm}) is true for iterate $(m-1)$ then
\begin{align*}
  \vert q^{(m)}_k(x) \vert &= \vert q^{(m-1)}_k(x) - q^{(m-1)}_k(x_m)q^{(m)}_m(x) \vert%
 \leq 2^{m-2}+2^{m-2} = 2^{m-1}. 
\end{align*}
To finish the proof of (\ref{lambda}) it is sufficient to note that 
\begin{align*}
 \vert L_m \vert &= \sup_{x\in I}\  \sum^m_{k=1} \vert q^{(m)}_k (x)\vert%
\leq \sum^m_{k=1} \vert q^{(m)}_k (x)\vert %
\leq \sum^{m-1}_{k=0}  2^k = 2^m-1.
\end{align*}
The estimation of $\tilde{L}_n$ can be done in the same way.
\qed\\
\end{proof}
\begin{remark}
According to Proposition \ref{prop_erreur}, the method is efficient as soon as
$2^{m+n}\times \varepsilon^*_{m,n}$ is small enough for given $m$ and $n$.
It occurs that the upper bound~\eqref{lambda_qm} is pessimistic. Numerical results
at the end of this paper show that the Lebesgue constants are observed to behave linearly
with respect to $m$ or $n$. \\
\end{remark}
%

%
The tensor decomposition of the function $f$ given by (\ref{tensor_decomp}) leads to a complexity of ($m\times n$)  products of one-variable functions. The reduction of this complexity will be investigated and discussed in the next section.
%
\section{Reduction of complexity: SVD decomposition}
\label{sec:svd}
Let $\bm{q}(x)\in\mathbb{R}^m$ (resp. $\bm{s}(y)\in\mathbb{R}^n$) the vector defined by~: $\bm{q}(x)=(q^{(m)}_1(x),q^{(m)}_2(x),...,q^{(m)}_m(x))^T$ (resp. $\bm{s}(y)=(s^{(n)}_1(y),s^{(n)}_2(y),...,s^{(n)}_n(y))^T$). The matrix $F\in\mathscr{M}_{mn}(\mathbb{R})$ is given by $F_{ij}=f(x_i,y_j)$, $1\leq i\leq m$, $1\leq j\leq n$. Then (\ref{tensor_decomp}) can be written as follows

\begin{equation}\label{imn_svd1}
\mathcal{I}^{m,n} f(x,y) = \bm{s}(y)^T\, F^T\, \bm{q}(x)
= \bm{q}(x)^T\, F\, \bm{s}(y). 
\end{equation}

According to the SVD decomposition theorem (\cite{svd,book_svd}) the matrix 
$F\in\mathscr{M}_{mn}(\mathbb{R})$ can be decomposed into 
\begin{equation}
 F = U \,\Sigma\, V^T,
\end{equation}
where $U\in\mathscr{M}_m(\mathbb{R})$ (resp. $V\in\mathscr{M}_n(\mathbb{R})$) is a unitary matrix, $U^T U=I_m$ (resp. $V^T V=I_n$) and $\Sigma\in\mathscr{M}_{mn}(\mathbb{R})$, such that $\Sigma_{ij}=0$ for $i\neq j$ and $\Sigma_{ii}:=\sigma_i$, $\sigma_1\geq \sigma_2\geq ...\geq \sigma_{\min(m,n)}\geq 0$. Using the SVD decomposition, expression~(\ref{imn_svd1}) becomes
\[
\mathcal{I}^{m,n} f(x,y) = \bm{s}(y)^T V \,\Sigma^T\, U^T \bm{q}(x)
= (V^T\bm{s}(y))^T  \,\Sigma^T\, (U^T \bm{q}(x)).
\]
Let $\bm{\Phi}(x)=U^T \bm{q}(x)$ and $\bm{\Psi}(y)=V^T \bm{s}(y)$ then

\[
\mathcal{I}^{m,n} f(x,y) = (\bm{\Psi}(y))^T\, \Sigma^T\, \bm{\Phi}(x).
\]
Denoting $\varphi_k(x)$ (resp. $\psi_k(y)$) the $k$-th component of $\bm{\Phi}(x)$
(resp. $\bm{\Psi}(y)$), the latter expression can be written
\begin{equation}\label{rank_k}
\mathcal{I}^{m,n} f(x,y) = \sum_{k=1}^{\min(m,n)} \sigma_k\, 
\varphi_k(x)\, \psi_k(y).
\end{equation}
If the singular value sequence  $\{\sigma_k\}_k$ rapidly decays, the singular decomposition of $F$ can be reasonably truncated to a rank $K\leq \min(m,n)$.
We then consider the following reduced tensor representation of~$f$:
\begin{equation}
  \tilde f^{(K)}(x,y) = \sum_{k=1}^K \sigma_k\, \varphi_k(x)\,\psi_k(y),
	\label{eq:tildef}
\end{equation}
\subsection{Error analysis of the truncated TEIM-SVD decomposition}

\begin{proposition}\label{prop31}
We have the following error estimation:
\begin{equation}
\|f-\tilde f^{(K)}\|_{L^\infty} \leq 
(1+L_m \tilde{L}_n) \
\inf_{f^*\in W^{m,n}} \|f - f^*\|_{L^\infty} \ + \ 
L_m \tilde L_n \sqrt{mn}  \left(\dfrac{\displaystyle{\sum_{k=K+1}^n \lambda^k}}{\displaystyle{\sum_{k=1}^n \lambda^k}}\right)^{1/2}\, \|F\|_F,
\label{eq:errork}
\end{equation}
where $\|.\|_F$ denotes the standard Frobenius matrix norm and $\lambda^k=(\sigma^k)^2$ are the (nonnegative) eigenvalues of the symmetric matrix $(F^T F)$.\\
\end{proposition}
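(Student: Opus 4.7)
The plan is to apply the triangle inequality with $\mathcal{I}^{m,n}f$ as intermediate, so that
\[
\|f - \tilde f^{(K)}\|_{L^\infty} \leq \|f - \mathcal{I}^{m,n} f\|_{L^\infty} + \|\mathcal{I}^{m,n} f - \tilde f^{(K)}\|_{L^\infty}.
\]
The first summand is controlled directly by Proposition~\ref{prop_erreur}, which produces the contribution $(1 + L_m\tilde L_n)\,\varepsilon^*_{m,n}$ appearing as the first term of~\eqref{eq:errork}. All remaining work is to estimate the SVD truncation error represented by the second summand.

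For that step I would use the matrix form~\eqref{imn_svd1} of $\mathcal{I}^{m,n}f$ together with the observation that, under the change of basis $\bm{\Phi}(x) = U^T\bm{q}(x)$ and $\bm{\Psi}(y) = V^T\bm{s}(y)$, the rank-$K$ object $\tilde f^{(K)}$ defined by~\eqref{eq:tildef} corresponds exactly to replacing $F = U\Sigma V^T$ by its best rank-$K$ approximation $F_K = \sum_{k=1}^K \sigma_k u_k v_k^T$. Consequently
\[
\mathcal{I}^{m,n} f(x,y) - \tilde f^{(K)}(x,y) = \bm{q}(x)^T (F - F_K)\,\bm{s}(y),
\]
and an $L^\infty$ estimate follows by expanding the bilinear form entrywise: $|\bm{q}^T (F-F_K)\bm{s}| \leq \|\bm{q}\|_\infty \|\bm{s}\|_\infty \sum_{i,j}|(F-F_K)_{ij}|$. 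Using $|q_i^{(m)}(x)|\leq \sum_{i'} |q_{i'}^{(m)}(x)|\leq L_m$ and the analogous bound $|s_j^{(n)}(y)|\leq \tilde L_n$ (both inherited from the very definition of the Lebesgue constants) extracts the prefactor $L_m\tilde L_n$, while Cauchy--Schwarz against the all-ones vector in $\mathbb{R}^{mn}$ gives $\sum_{i,j}|(F-F_K)_{ij}|\leq \sqrt{mn}\,\|F-F_K\|_F$. Finally, the Eckart--Young identity $\|F-F_K\|_F^2 = \sum_{k=K+1}^n\sigma_k^2 = \sum_{k=K+1}^n\lambda^k$, combined with $\|F\|_F^2 = \sum_{k=1}^n\lambda^k$, rewrites this Frobenius norm in the ratio form announced in~\eqref{eq:errork}.

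The main obstacle I anticipate is the choice of norm pairing in the bilinear estimate: the sharper $(\ell^2,\ell^2)$-spectral-norm pairing would yield only a factor $\sigma_{K+1}$ in place of $\sqrt{mn}\,\|F-F_K\|_F$ and would not match the announced constant. The cruder $(\ell^\infty,\ell^\infty)$ splitting used above, together with the $\ell^1\!\to\!\ell^2$ conversion on the entries of $F-F_K$, is what produces the $L_m\tilde L_n\sqrt{mn}$ prefactor of the proposition; once that pairing is fixed, the rest of the argument is a routine chain of inequalities and standard SVD identities.
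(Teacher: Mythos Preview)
Your proposal is correct and follows essentially the same route as the paper: a triangle inequality splitting off $\|f-\mathcal{I}^{m,n}f\|_{L^\infty}$ (handled by Proposition~\ref{prop_erreur}), then the bilinear representation $\mathcal{I}^{m,n}f-\tilde f^{(K)}=\bm{q}(x)^T(F-F_K)\bm{s}(y)$ bounded via $|q_i|\leq L_m$, $|s_j|\leq \tilde L_n$, Cauchy--Schwarz on the entries, and the SVD identity for $\|F-F_K\|_F$. The only cosmetic difference is that the paper writes a three-term triangle inequality inserting $\mathcal{I}^{m,n}\tilde f^{(K)}$ and then observes $\mathcal{I}^{m,n}\tilde f^{(K)}=\tilde f^{(K)}$ since $\tilde f^{(K)}\in W^{m,n}$, whereas you use the matrix identity directly; the two arguments are equivalent.
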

\begin{proof}[proposition~\ref{prop31}]
Let us denote by $\bm{r}_k\in\mathbb{R}^m$ the $k$-th column vector of $U$ and by $\pi^{(K)}$ the
orthogonal projector on $W^{(K)}:=\mathop{span}(\bm{r}_1,...,\bm{r}_K)$ with $K < \min(m,n)$.
Let us denote by $(\bm{f}_k)_{k\in\{1,...,n\}}$ the $n$ column vectors of matrix $F$.
Denoting 
\[
\tilde \Sigma^{(K)} = \begin{pmatrix} I_K & (0) \\ (0) & (0)\end{pmatrix} \Sigma
\]
and $\tilde F^{(K)} = U\, \tilde \Sigma^{(K)}\, V^T$, we have
$\sum_{k=1}^K \|\bm{f}_k - \pi^{(K)} \bm{f}_k\|^2_{2,\mathbb{R}^m} = \|F-\tilde F^{(K)}\|_F^2$.
From standard SVD decomposition error estimations (\cite{book_svd}), we have
the following error estimate
\[
\|F-\tilde F^{(K)}\|_F^2=
\sum_{k=1}^K \|\bm{f}_k - \pi^{(K)} \bm{f}_k\|^2_{2,\mathbb{R}^m} \leq 
\dfrac{\displaystyle{\sum_{k=K+1}^n \lambda^k}}{\displaystyle{\sum_{k=1}^n \lambda^k}}\ \sum_{k=1}^n \|\bm{f}_k\|^2_{2,\mathbb{R}^m}
= \dfrac{\displaystyle{\sum_{k=K+1}^n \lambda^k}}{\displaystyle{\sum_{k=1}^n \lambda^k}}\, \|F\|_F^2.
\]
By the triangular inequality we have
\[
\|f - \tilde f^{(K)}\|_{L^\infty} \leq 
  \| f - \mathcal{I}^{m,n}_{} f \|_{L^\infty} 
+ \| \mathcal{I}^{m,n}_{} f - \mathcal{I}^{m,n}_{} (\tilde f^{(K)}) \|_{L^\infty} 
+ \| \mathcal{I}^{m,n}_{} f^{(K)} - \tilde f^{(K)} \|_{L^\infty} 
\]
From the previous result of proposition~\ref{prop_erreur}, it has been show that
\[
\| f - \mathcal{I}^{m,n}_{} f \|_{L^\infty}  \leq (1+L_m \tilde{L}_n) \
\inf_{f^*\in W^{m,n}} \|f - f^*\|_{L^\infty}.
\]
Because $\tilde f^{(K)}\in\mathop{span}(\{\varphi_k(x)\psi_k(y)\}_k)$, we have also
$\tilde f^{(K)}\in W^{m,n}$. Thus,
$\tilde f^{(K)}=\mathcal{I}^{m,n} \tilde f^{(K)}$ and
\[
\|f - \tilde f^{(K)}\|_{L^\infty} \leq 
 (1+L_m\tilde{L}_n) 
\inf_{f^*\in W^{m,n}} \|f - f^*\|_{L^\infty}
\ + \ \| \mathcal{I}^{m,n}_{} f - \mathcal{I}^{m,n}_{} (\tilde f^{(K)}) \|_{L^\infty}.
\]
Functions $\mathcal{I}^{m,n}_{} f$ and $\mathcal{I}^{m,n}_{} (\tilde f^{(K)})$ are respectively given by
\[
\mathcal{I}^{m,n} f(x,y) = \sum_{i,j} f(x_i,y_j)\, q^{(m)}_i(x)\, s^{(n)}_j(y), \ \quad
\mathcal{I}^{m,n} \tilde f^{(K)}(x,y) = \sum_{i,j} \tilde f^{(K)}(x_i,y_j)\, q^{(m)}_i(x)\, s^{(n)}_j(y), 
\]
hence
\[
(\mathcal{I}^{m,n} f - \mathcal{I}^{m,n} \tilde f^{(K)})(x,y)
= \sum_{i=1}^m\sum_{j=1}^n (f(x_i,y_j) - \tilde f^{(K)}(x_i,y_j))  \, q^{(m)}_i(x) \, s^{(n)}_j(y).
\]
and
\begin{eqnarray*}
\|\mathcal{I}^{m,n} f - \mathcal{I}^{m,n} \tilde f^{(K)}\|_{L^\infty}
&\leq & \sum_{i=1}^m\sum_{j=1}^n |(f(x_i,y_j) - \tilde f^{(K)}(x_i,y_j))  |
  \ L_m \tilde L_n\\
	&\leq & L_m \tilde L_n \sqrt{mn} \left(\sum_{i=1}^m\sum_{j=1}^n |(f(x_i,y_j) - \tilde f^{(K)}(x_i,y_j))  |^2\right)^{1/2} \\
	&=&  L_m \tilde L_n \sqrt{mn}  \, \|F-\tilde F^{(K)}\|_F.
\end{eqnarray*}
This completes the proof. \qed\\
\end{proof}
\section{Numerical experiments}
\label{sec:num}

In this section,  the tensor empirical interpolation method TEIM and 
its approximate truncated SVD decomposition are tested on the continuous function
\begin{equation}\label{funct_test}
f(x,y) =  x+y+xy+e^{-(x^2+y^2)}+\sin(3\pi y)-\sin(\pi x y^2+\pi x e^{-y})  
\end{equation}
defined on $\Omega=I\times J=[0,1]^2$ (plotted on Fig.~\ref{f_fappro}, left plot).
First, we demonstrate the accuracy and efficiency of the TEIM. We choose $m=n=10$. The couple of points $(x_i,y_j)$, $1\leq i,j\leq 10$ selected by the TEIM algorithm are represented in Fig.~\ref{fig:funct_pointInterpo}.
\begin{figure}[h!]
\begin{center}
\includegraphics[height=6.5cm]{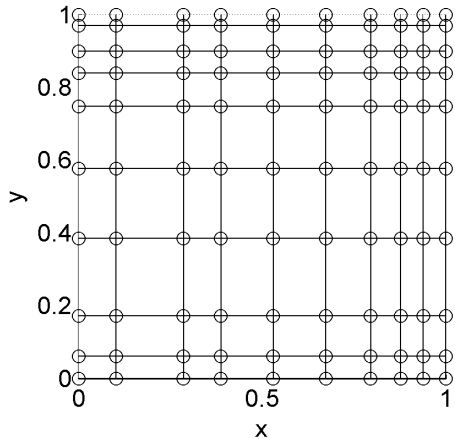}
\end{center}
\caption{Distribution of the first $100$ points ($x_i$,$y_j$), $1\leq \{i,j\} \leq 10$ selected by  the TEIM algorithm.}
\label{fig:funct_pointInterpo}
\end{figure}
%
%
The four first basis functions $q_i(x)$ (resp. $s_j(y)$) resulting from the TEIM as well as  the ten interpolation points $x_i$ (resp. $y_j$) are represented in Fig.~\ref{fig:base_q} (resp. Fig.~\ref{fig:base_s}). As it was proved in section \ref{sec:teim}, the basis functions $q_i$  (resp. $s_j$) satisfy the property $q_i(x_j)=\delta_{ij}$ for all $i \in \{1,10\}$ (resp. $s_j(y_i)=\delta_{ji}$ for all $j \in \{1,10\}$). We can also observe that $\|q_k\|_{L^\infty_x}$ and 
$\|s_\ell\|_{L^\infty_y}$ are not far from $1$.
\begin{figure}[h!]
\begin{center}
\includegraphics[width=0.85\textwidth]{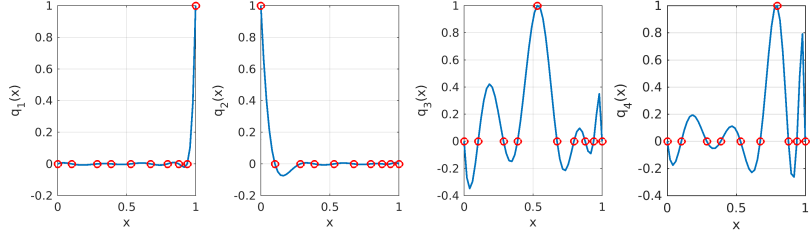}
\end{center}
\caption{The four first basis functions $q_i(x)$ with the ten first TEIM points $x_i$.}
\label{fig:base_q}
\end{figure}
\begin{figure}[h!]
\begin{center}
\includegraphics[width=0.85\textwidth]{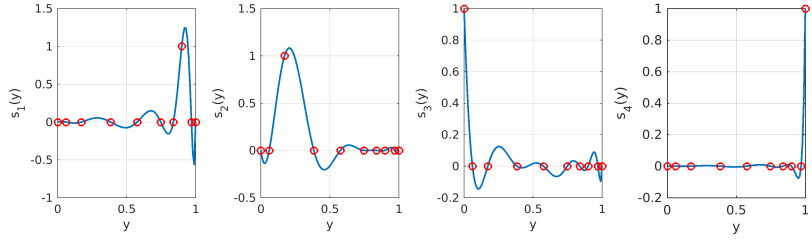}
\end{center}
\caption{The four first basis functions $s_j(y)$ with the ten first TEIM points $y_j$.}
\label{fig:base_s}
\end{figure}
We present in Fig.~\ref{fig:lambdas} the 
computed values of $L_m$ and~$\tilde{L}_n$ calculated for different number of iterations $m$ and $n$. We observe that $L_m$ and~$\tilde{L}_n$  linearly grow
with $m$ and $n$ respectively and they provide a very stable interpolation provided by TEIM. The upper bounds of the Lebesgue constants $L_m$ and $\tilde{L}_n$ derived in section \ref{sec:error} appear to be pessimistic.   

To numerically study the TEIM error, we chosse $m=n$ and we compute the infinite norm of the error $\Vert f -\mathcal{I}^{m,n}_{x,y}\Vert_{L^{\infty}}$ for various values of $m$ from $1$ to $10$. The TEIM error is represented in Fig.~\ref{fig:erreur} (right) in semi-logarithmic scale. Note that the error converges rapidly with $m$ which confirms the accuracy and efficiency of the developed TEIM
approach.
\begin{figure}[h!]
\begin{center}
\includegraphics[height=7.5cm]{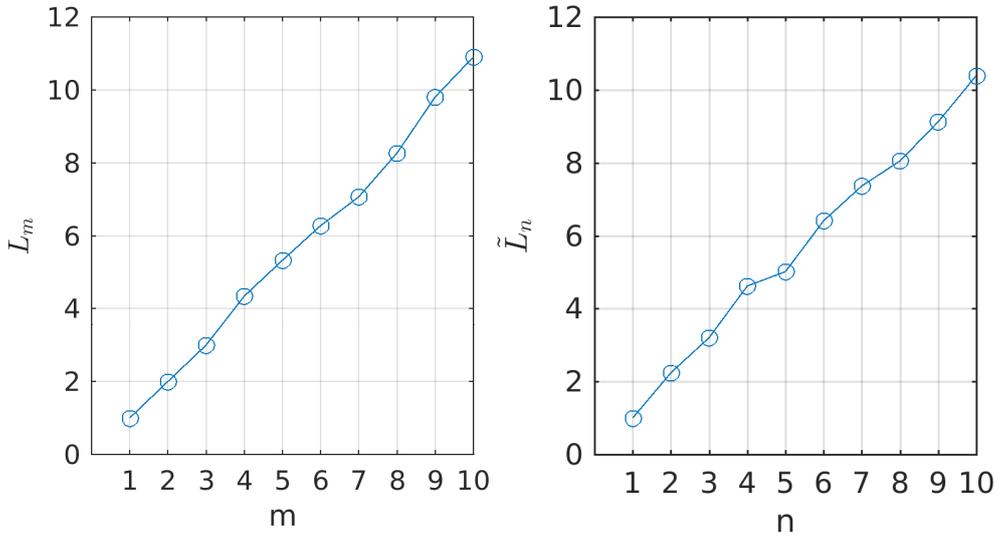}
\end{center}
\caption{The Lebesgue constants $L_m$ (left) and $\tilde{L}_n$ (right) with respect
to $m$ (resp. $n$), $\{m,n\} \in [1,10]$.}
\label{fig:lambdas}
\end{figure}
\begin{figure}[h!]
\begin{center}
\includegraphics[height=7.5cm]{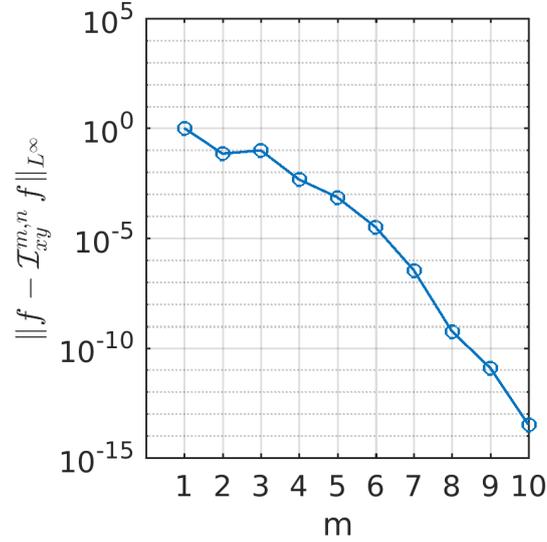}
\end{center}
\caption{Convergence of the TEIM interpolation approach with respect to $m$ ($n=m$).}
\label{fig:erreur}
\end{figure}

In order to reduce the complexity induced by TEIM, we apply the SVD to the interpolation operator of~$f$ (expression~\ref{tensor_decomp}) to get an approximation of~$f$ in the form (\ref{rank_k}) as it has been explained in section \ref{sec:svd}. The four first functions $\varphi(x)$ (resp. $\psi(y)$) resulting from the SVD decomposition applied to the operator~$\mathcal{I}^{10,10}$ are represented in Fig.~\ref{fig:phi} (resp. Fig.~\ref{fig:psi}). Fig.~\ref{fig:sigma} shows the $10$ singular values $\sigma_k$, $1\leq k\leq 10$ in a semi-logarithmic scale. We observe that the values $\sigma_k$ decay rapidly. Then, the singular decomposition of~$F$ can be indeed reduced to a truncated low-rank. In Fig.~\ref{f_fappro}, we can see that with the only rank-2 approximation 
\[
\tilde f^{(2)}(x,y) = \sigma_1\, \varphi_1(x)\,\psi_1(y)
+\sigma_2\, \varphi_2(x)\,\psi_2(y),
\]
we have a good reduced representation of the function $f$, with infinite-norm relative error of order 1\%.

\begin{figure}[h!]
\begin{center}
\includegraphics[height=4.cm]{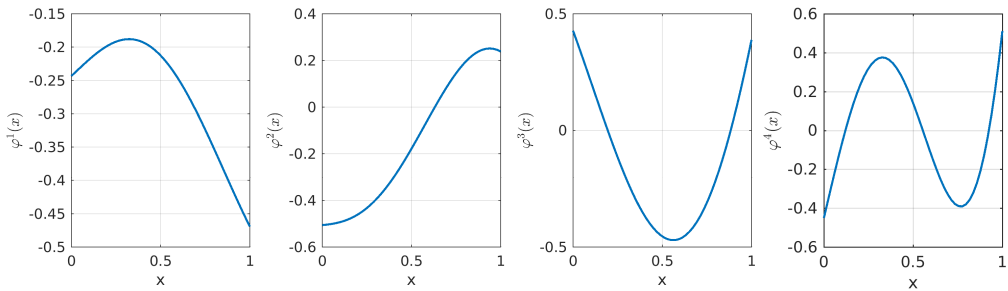}
\end{center}
\caption{The four first functions $\varphi(x)$ resulting from the SVD decomposition.}
\label{fig:phi}
\end{figure}

\begin{figure}[h!]
\begin{center}
\includegraphics[height=4.cm]{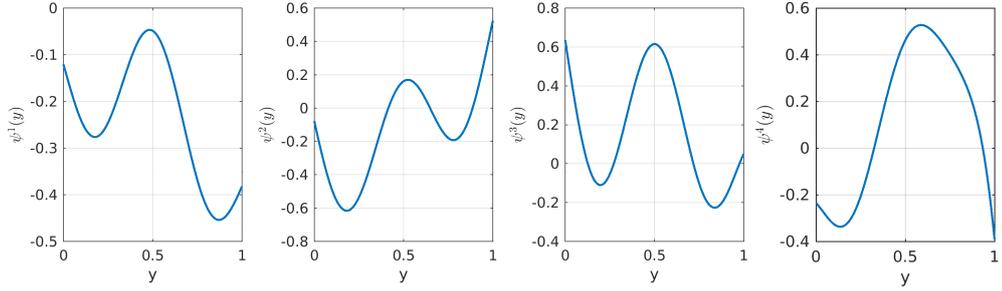}
\end{center}
\caption{The four first functions $\psi(y)$ resulting from the SVD decomposition.}
\label{fig:psi}
\end{figure}

\begin{figure}[h!]
\begin{center}
\includegraphics[height=6.5cm]{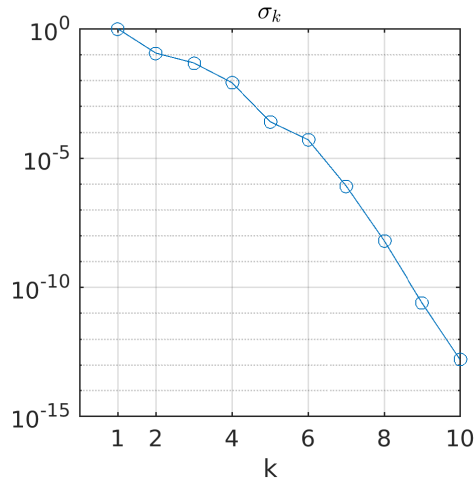}
\end{center}
\caption{The singular values $\sigma_k$ from the SVD decomposition.}
\label{fig:sigma}
\end{figure}

\begin{figure}[h!]
\begin{center}
\includegraphics[height=6.5cm]{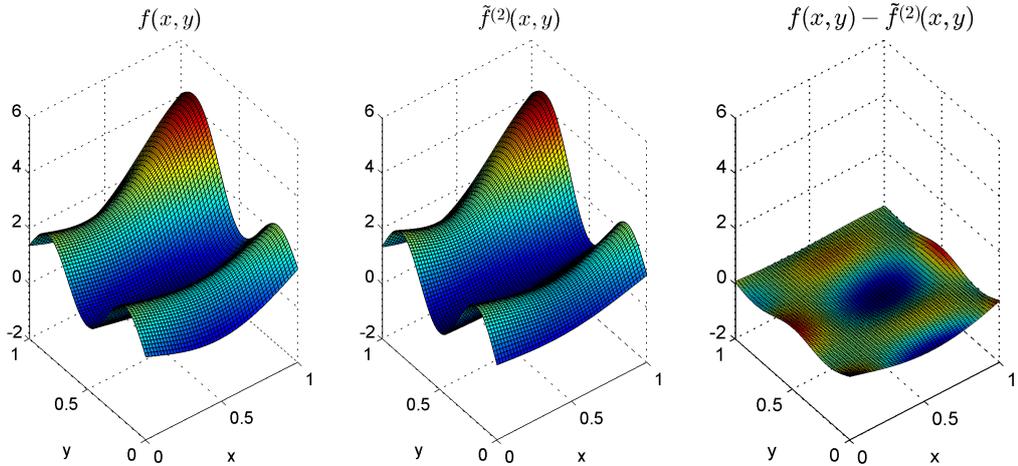}
\end{center}
\caption{The tested function $f$ given analytically in expression~(\ref{funct_test}) (left). The reduced representation of $f$ (middle).  The difference between the function $f$ and its approximation $\tilde{f}^2$ (right).}
\label{f_fappro}
\end{figure}


\section{Concluding remarks and perspectives}
\label{sec:conclusion}
In this work, we have derived a Tensor Empirical Interpolation Method (TEIM) for bivariate functions. The presented method is inspired by the classical EIM where the greedy procedure is used to compute the interpolation points and the basis functions for each direction. The algorithm returns interpolation functions that
directly have the Lagrange property.
The TEIM thus provides a natural interpolation of the bivariate function over a tensorized collocation grid. It leads to a complexity of ($m\times n$)  products of one-variable functions. To reduce this complexity, we apply the Singular Value Decomposition (SVD) to the TEIM separate variable decomposition.  
If the singular values decay fast enough, one can additionally truncate the
decomposition, leading to a low-rank representation. We have performed an
error estimate in this case, combining TEIM interpolation error and SVD truncation error.
Numerical experiments confirm that the mixed SVD-TEIM decomposition have a very good behavior in terms of stability and accuracy. \\

This paper only deals with bi-variate, but there is no difficulty to extend the 
proposed approach to the multivariate case, at least in the ``low-dimensional'' case
(say a number of parameters less than 6). For SVD decomposition in this case, one  can use extensions of SVD decomposition for tensors. Future works will address
this issue. For perspective, we also plan to apply the decompositions developed 
in this paper for achieving non-intrusive reduced-order modeling of time-dependent solutions of partial differential equations \cite{audouze}, with separate decomposition of space and time variables.

%

\end{document}